\newcommand{\floor}[1]{\left\lfloor #1 \right\rfloor}
\newcommand{\cone}{\operatorname{cone}}
\newcommand{\R}{\mathbb{R}}
\newcommand{\Z}{\mathbb{Z}}
\newcommand{\setcond}[2]{\left\{ #1 \,:\, #2 \right\}}
\newcommand{\vol}{\operatorname{vol}}
\newcommand{\gen}[1]{\left< #1 \right>}
\newcommand{\excise}[1]{}
\newcommand{\ve}{\boldsymbol}
\newtheorem{thm}{Theorem}
\newtheorem{lemma}[thm]{Lemma}
\newtheorem{cor}[thm]{Corollary}
\newtheorem{prob}[thm]{Problem}
\theoremstyle{definition}
\newtheorem{remark}[thm]{Remark}{\bfseries}{\normalfont}
\newcommand{\be}{\begin{eqnarray}}
\newcommand{\bea}{\begin{eqnarray*}}
\newcommand{\ee}{\end{eqnarray}}
\newcommand{\eea}{\end{eqnarray*}}
\newcommand{\ring}[1]{\ensuremath{\mathbb{#1}}}
\newcommand{\lattice}{\mathcal{L}}
\newcommand{\ICR}{\mathsf{ICR}}
\newcommand\ZZ{\ring{Z}}
\newcommand\icone{{\mathcal Sg}}
\def\udots{\mathinner{\mkern1mu\raise1pt\vbox{\kern7pt\hbox{.}}\mkern2mu
	\raise4pt\hbox{.}\mkern2mu\raise7pt\hbox{.}\mkern1mu}}
\title{Optimizing Sparsity over Lattices and Semigroups}
\author{Iskander Aliev, Gennadiy Averkov, Jes\'us A. De Loera, and Timm Oertel}
\date{\today }
\begin{document}
\maketitle

\begin{abstract}
	\noindent
	Motivated by problems in optimization we study the \emph{sparsity} of the solutions to systems of linear Diophantine 
	equations and linear integer programs, 
	i.e., the number of non-zero entries of a solution, which is often referred to as the $\ell_0$-norm.
	Our main results are improved bounds on the $\ell_0$-norm of sparse solutions to systems $A\ve x=\ve b$, where $A\in \Z^{m\times n}$, ${\ve b}\in \Z^m$ and $\ve x$ 
	is either a general integer vector (lattice case) or a non-negative integer vector (semigroup case). In the lattice case and certain scenarios of the semigroup case, we give polynomial time algorithms for computing solutions with $\ell_0$-norm satisfying the obtained bounds.
\end{abstract}


\section{Introduction}

This paper discusses the problem of finding sparse solutions to systems of linear Diophantine equations 
and integer linear programs. 
We investigate the $\ell_0$-norm $\|\ve x\|_0 := |\setcond{ i }{x_i \ne 0}|$, a function widely used in the theory of {\em compressed sensing} \cite{CS_survey,candestao}, which measures the sparsity of a given vector $\ve x = (x_1,\ldots,x_n)^\top \in \R^n$ (it is clear that the $\ell_0$-norm is actually  not a norm).

Sparsity is a topic of interest in several areas of optimization.
The $\ell_0$-norm minimization problem over reals is central in the theory of the classical compressed sensing, 
where a linear programming relaxation provides a guaranteed approximation \cite{candesetal2006stable,candestao}.
Support minimization for solutions to Diophantine equations is relevant for the theory of compressed sensing for discrete-valued
signals \cite{PROMP,Lenya,Konyagin}. There is still little understanding of discrete signals in the compressed sensing paradigm, despite the fact that there are many applications in which the signal is known to have discrete-valued entries, for instance, in wireless communication \cite{MIMO} and the theory of error-correcting codes \cite{ECC}.  
Sparsity was also investigated in integer optimization \cite{Support,EisenbrandShmonin2006,MR3950898}, where many combinatorial optimization problems have useful interpretations as  sparse semigroup problems. For example, the edge-coloring problem can be seen as a problem in the semigroup generated by matchings of the graph \cite{LOVASZ1987187}. Our results provide natural out-of-the-box sparsity bounds for problems with linear constraints and integer variables in a general form.

\subsection{Lattices: sparse solutions of linear Diophantine systems}

Each integer matrix $A \in \Z^{m \times n}$ determines the lattice 
$\lattice(A):= \setcond{A \ve x }{\ve x \in \Z^n}$
generated by the columns of $A$. By an easy reduction via row transformations, we may assume without loss of generality that the rank of $A$ is $m$. 

Let $[n]:= \{1,\ldots,n\}$ and let $\binom{[n]}{m}$ be the set of all $m$-element subsets of $[n]$. For $\gamma \subseteq [n]$, consider the $m \times |\gamma|$ submatrix $A_\gamma$ of $A$ with columns indexed by $\gamma$. One can easily prove that the determinant of $\lattice(A)$ is equal to
\[
\gcd(A):= \gcd \setcond{\det(A_\gamma)}{\gamma \in \binom{[n]}{m}}.
\]

Since $\lattice(A_\gamma)$ is the lattice spanned by the columns of $A$ indexed by $\gamma$, it is a sublattice of $\lattice(A)$. We first deal with a natural question: {\em Can the description of a given lattice $\lattice(A)$ in terms of $A$ be made sparser by passing from $A$ to $A_\gamma$ with $\gamma$ having a smaller cardinality than $n$ and satisfying $\lattice(A) = \lattice(A_\gamma)$? } That is, we want to discard some of the columns of $A$ and generate $\lattice(A)$ by $|\gamma|$ columns with $|\gamma|$ being possibly small.



For stating our results, we need several number-theoretic functions. Given $z \in \Z_{>0}$, consider the prime factorization $z = p_1^{s_1} \cdots p_k^{s_k}$ with pairwise distinct prime factors $p_1,\ldots,p_k$ and their multiplicities $s_1,\ldots,s_k \in \Z_{>0}$. 
Then the number of prime factors $\sum_{i=1}^k s_i$  counting the multiplicities is denoted by $\Omega(z)$. Furthermore, we 
introduce $\Omega_m(z) := \sum_{i=1}^k \min \{s_i,m\}$. That is, by introducing $m$ we set a threshold to account for multiplicities. In the case $m=1$ we thus have $\omega(z):=\Omega_1(z) = k$, which is the number of prime factors in $z$, not taking the multiplicities into account. The functions $\Omega$ and $\omega$ are called \emph{prime $\Omega$-function} and \emph{prime $\omega$-function}, respectively, in number theory \cite{hardy2008introduction}. We call $\Omega_m$ the \emph{truncated prime $\Omega$-function}.

\begin{thm}
	\label{thm:sparsifying:lattice:A}
	Let $A \in \Z^{m \times n}$, with $m \le n$, and let $\tau \in \binom{[n]}{m}$ be such that the matrix $A_\tau$ is non-singular. Then the equality $\lattice(A) = \lattice(A_\gamma)$
	holds for some $\gamma$ satisfying $\tau \subseteq \gamma \subseteq [n]$ and
	\begin{equation}
		\label{card:gamma:bound}
		|\gamma| \le m + \Omega_m \left( \frac{|\det(A_\tau)|}{\gcd(A)} \right).
	\end{equation}
	Given $A$ and $\tau$, the set $\gamma$ can be computed in polynomial time.
\end{thm}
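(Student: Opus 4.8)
\emph{Reformulation.} Since $A_\tau$ is nonsingular, $\rank A=m$, so $\lattice(A)$ is a rank-$m$ lattice with finite-index sublattice $\lattice(A_\tau)$; put $G:=\lattice(A)/\lattice(A_\tau)$, a finite abelian group of order $|\det A_\tau|/\gcd(A)=:d$. As $\lattice(A)\cong\Z^m$, the group $G$ is generated by at most $m$ elements, so $G\cong\Z/d_1\Z\oplus\dots\oplus\Z/d_m\Z$ with $d_1\mid\dots\mid d_m$ and $d=d_1\cdots d_m$. For $\tau\subseteq\gamma\subseteq[n]$ one has $\lattice(A_\gamma)=\lattice(A)$ iff the residues $\bar A_j$ with $j\in\gamma\setminus\tau$ generate $G$; since $\lattice(A_{[n]})=\lattice(A)$, the residues of all columns generate $G$. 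So it suffices to extract, algorithmically, a subfamily of these residues of size $\le\Omega_m(d)$ that still generates $G$. A Hermite normal form computation yields in polynomial time a basis of $\lattice(A)$, the number $\gcd(A)=\det\lattice(A)$, and the coordinates of every column and of a set of defining relations of $\lattice(A_\tau)$ in that basis — with no integer factorization.

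\emph{The cyclic subroutine.} Let $G=\Z/N\Z$ be cyclic with $N\ge2$, and let $x_j\in\Z/N\Z$ be the residue of $\bar A_j$. I claim a greedy procedure selects a generating subfamily of size $\le\omega(N)$. Maintain a divisor $N'\mid N$ (initially $N$) with the property that the current residues generate $\Z/N'\Z$. Since $N'\ge2$ and the residues generate, the $x_j$ cannot all be divisible by every prime of $N'$, so some $x_j$ satisfies $\operatorname{rad}(N')\nmid\gcd(x_j,N')$ — a property one tests without factoring, via $\gcd\!\bigl(N',\gcd(x_j,N')^{\lceil\log_2 N'\rceil}\bigr)<N'$; select that column and replace $N'$ by $\gcd(x_j,N')$. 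Each step deletes at least one prime from $N'$, so the procedure halts after $\le\omega(N)$ steps. (Note that the plausible-looking alternative ``take the column of largest order'' is \emph{wrong}: on $\Z/72\Z$ with residues $6,8,9$ it first picks $6$ and then needs three columns, although $\{8,9\}$ already generates.)

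\emph{The general procedure.} Write $G\cong H\oplus\Z/d_m\Z$ where $H$ has invariant factors $d_1\mid\dots\mid d_{m-1}$, and let $\phi\colon G\to\Z/d_m\Z$ be the projection onto the second summand. The residues $\phi(\bar A_j)$ generate $\Z/d_m\Z$, so the cyclic subroutine picks $S_1$ with $|S_1|\le\omega(d_m)=\omega(\exp G)$ and $\langle\phi(\bar A_j):j\in S_1\rangle=\Z/d_m\Z$; then $\langle\bar A_j:j\in S_1\rangle+H=G$, so $G^{(1)}:=G/\langle\bar A_j:j\in S_1\rangle$ is a quotient of $H$, hence has at least one fewer nontrivial invariant factor than $G$, while the residues of the remaining columns still generate it. Iterate, producing $S_1,S_2,\dots$ and quotients $G=G^{(0)},G^{(1)},\dots$; after $r\le m$ rounds $G^{(r)}=0$, and $\gamma:=\tau\cup S_1\cup\dots\cup S_r$ satisfies $\lattice(A_\gamma)=\lattice(A)$. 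To bound the size, note the primes of $\exp G^{(k-1)}$ are exactly the $p$ with $(G^{(k-1)})_p\ne0$, so $\sum_k|S_k|\le\sum_k\omega(\exp G^{(k-1)})=\sum_p\#\{k:(G^{(k-1)})_p\ne0\}$; for a fixed $p$ this count is $\le m$ (there are $\le m$ rounds) and $\le v_p(d)$ (whenever $(G^{(k-1)})_p\ne0$ we have $p\mid\exp G^{(k-1)}$ and $\langle\bar A_j:j\in S_k\rangle$ surjects onto $\Z/(\exp G^{(k-1)})\Z$, so the $p$-part of the quotient shrinks by a factor $\ge p$). Hence $\sum_k|S_k|\le\sum_p\min(v_p(d),m)=\Omega_m(d)$, i.e.\ $|\gamma|\le m+\Omega_m(d)$. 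Each round costs one Smith (and Hermite) normal form computation on a polynomially sized integer matrix plus $O(\log d)$ gcd operations, and there are $\le m$ rounds, so the procedure is polynomial and factorization-free. (For the bound alone one may also argue via the primary decomposition: for each prime $p\mid d$, multiplying by $d/p^{v_p(d)}$ carries the residues onto a generating set of the $p$-primary part $G_p$, from which one extracts $\dim_{\F_p}(G_p/pG_p)\le\min(v_p(d),m)$ columns spanning $G_p/pG_p$; by Nakayama these generate $G_p$, and their union over $p$ generates $G$.)

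\emph{Expected main obstacle.} The crux is getting $\Omega_m$ rather than the weaker $\Omega\bigl(|\det A_\tau|/\gcd(A)\bigr)$ produced by the bald ``shrink the index'' greedy: one must delete a \emph{prime} of the running modulus per step \emph{within} the cyclic subroutine and delete an \emph{invariant factor} per round \emph{across} the recursion, and only the interplay of these two savings yields $\sum_p\min(v_p(d),m)$. A secondary difficulty is to keep everything factorization-free, which is exactly why one works with Smith normal forms and with the radical test above instead of explicit prime factorizations.
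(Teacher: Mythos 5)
Your argument is correct, but it reaches the bound by a genuinely different route than the paper. Both proofs reduce to the same group-theoretic core: extract a small generating set of the finite abelian group $G=\lattice(A)/\lattice(A_\tau)$ of order $d=|\det(A_\tau)|/\gcd(A)$ from the images of the columns of $A$ (the paper first normalizes $\gcd(A)=1$ so that $G=\Z^m/\lattice(A_\tau)$, which is only cosmetic). From there the paper takes \emph{any} non-redundant generating subset and invokes a cited structural theorem (Theorem~\ref{thm:max:card:nonredundant}, from Geroldinger--Halter-Koch) asserting that every non-redundant subset of $G$ has cardinality at most $\kappa(G)$, the number of summands in the primary decomposition; Lemmas~\ref{lem:kappa:for:sum:cyclic} and \ref{lem:group:from:lattice} then give $\kappa(G)\le\Omega_m(d)$, and the algorithm is simply the greedy deletion of redundant columns, each test being one linear Diophantine solve (at most $n-m$ of them). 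You instead build the generating set bottom-up: a factorization-free greedy inside each cyclic quotient that deletes at least one prime of the running modulus per selected column, nested in an outer loop that kills one invariant factor per round, with the double count over primes and rounds delivering $\sum_p\min(v_p(d),m)=\Omega_m(d)$; I checked the key steps (existence of a column with $\operatorname{rad}(N')\nmid\gcd(x_j,N')$, the drop in the number of invariant factors each round, and the per-prime bound $\le\min(m,v_p(d))$ on the number of active rounds) and they all hold. What your approach buys is self-containedness --- you never need the non-redundant-subset theorem, and your parenthetical Nakayama argument even reproves $\kappa(G)\le\Omega_m(d)$ directly --- at the price of a more elaborate algorithm (repeated Smith/Hermite normal forms plus the radical test) where the paper gets away with solving a few Diophantine systems. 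Your caveat that ``pick the element of largest order'' fails is well taken, but note that the paper sidesteps this entirely: it does not need the greedy to be clever, because the cardinality bound is guaranteed for \emph{every} non-redundant subset.
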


One can easily see that $\omega(z) \le \Omega_m(z) \le \Omega(z) \le \log_2 (z)$ for every $z  \in \Z_{>0}$. The  estimate using $\log_2(z)$ gives a first impression on the quality of the bound \eqref{card:gamma:bound}. It turns out, however, that $\Omega_m(z)$ is much smaller on the average. Results in number theory \cite[\S22.10]{hardy2008introduction} show that the average values $\frac{1}{z}(\omega(1) + \cdots + \omega(z))$ and $\frac{1}{z}(\Omega(1) + \cdots + \Omega(z))$ are of order $\log \log z$, as $z \to \infty$.

As an immediate consequence of Theorem~\ref{thm:sparsifying:lattice:A} we obtain

\begin{cor}\label{cor:sparsity:dioph:sys}
Consider the linear Diophantine system
\begin{equation}\label{lin:dioph:sys}
	A \ve x = \ve b, \ \ve x \in \Z^n
\end{equation}
with $A \in \Z^{m \times n}$, $\ve b \in \Z^{m }$ and $m \le n$.  Let $\tau \in \binom{[n]}{m}$ be such that the $m \times m$ matrix $A_\tau$
is non-singular.  If \eqref{lin:dioph:sys} is feasible, then \eqref{lin:dioph:sys} has a solution $\ve x$ satisfying the sparsity bound

\begin{equation*}
	\|\ve x \|_0 \le m + \Omega_m \left( \frac{| \det(A_\tau)| }{\gcd(A)} \right).
\end{equation*}
Under the above assumptions, for given $A,\ve b$ and $\tau$, such a sparse solution can be computed in polynomial time.
\end{cor}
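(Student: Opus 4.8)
The plan is to derive the corollary directly from Theorem~\ref{thm:sparsifying:lattice:A} by exploiting the elementary fact that feasibility of \eqref{lin:dioph:sys} means precisely $\ve b \in \lattice(A)$, and that a solution of \eqref{lin:dioph:sys} supported on a set $\gamma \subseteq [n]$ exists if and only if $\ve b \in \lattice(A_\gamma)$. So the strategy is: use the theorem to replace $A$ by a submatrix $A_\gamma$ with few columns that still generates the same lattice, solve the smaller Diophantine system $A_\gamma \ve y = \ve b$, and pad the solution with zeros.

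In detail, I would first note that since \eqref{lin:dioph:sys} is feasible, $\ve b \in \lattice(A)$. Applying Theorem~\ref{thm:sparsifying:lattice:A} to $A$ and the given $\tau$ yields a set $\gamma$ with $\tau \subseteq \gamma \subseteq [n]$, with $\lattice(A) = \lattice(A_\gamma)$, and with $|\gamma| \le m + \Omega_m\bigl(|\det(A_\tau)|/\gcd(A)\bigr)$. Because $\ve b \in \lattice(A) = \lattice(A_\gamma)$, there is some $\ve y \in \Z^{|\gamma|}$ with $A_\gamma \ve y = \ve b$. Define $\ve x \in \Z^n$ by placing the entries of $\ve y$ in the coordinates indexed by $\gamma$ and setting all other coordinates to $0$. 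Then $A \ve x = A_\gamma \ve y = \ve b$, so $\ve x$ solves \eqref{lin:dioph:sys}, while $\supp(\ve x) \subseteq \gamma$ gives $\|\ve x\|_0 \le |\gamma| \le m + \Omega_m\bigl(|\det(A_\tau)|/\gcd(A)\bigr)$, which is the asserted bound.

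For the algorithmic claim, Theorem~\ref{thm:sparsifying:lattice:A} already provides $\gamma$ in polynomial time from $A$ and $\tau$. Given $\gamma$, a vector $\ve y \in \Z^{|\gamma|}$ with $A_\gamma \ve y = \ve b$ can be computed in polynomial time in the bit-size of the input via the Hermite (or Smith) normal form of $A_\gamma$; feasibility of this smaller system is guaranteed a priori since we know $\ve b \in \lattice(A_\gamma)$. Extending $\ve y$ to $\ve x$ by zeros is free. I do not expect any genuine obstacle here: the corollary is essentially a restatement of the theorem, and the only external ingredient is the standard fact that solving a feasible linear Diophantine system (equivalently, computing an integer normal form) is polynomial time, for which a citation suffices.
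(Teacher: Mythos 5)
Your proposal is correct and follows essentially the same route as the paper: both obtain $\gamma$ from Theorem~\ref{thm:sparsifying:lattice:A}, observe that $\ve b \in \lattice(A) = \lattice(A_\gamma)$, and extract a solution supported on $\gamma$ by solving the smaller Diophantine system $A_\gamma \ve y = \ve b$ in polynomial time. The only difference is that you spell out the zero-padding and the normal-form subroutine slightly more explicitly than the paper does.
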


From the optimization perspective, Corollary~\ref{cor:sparsity:dioph:sys} deals with the problem
\begin{equation*}
	\min \setcond{ \|\ve x\|_0 }{ A \ve x = \ve b, \ \ve x \in \Z^n}
\end{equation*}
of minimization of the $\ell_0$-norm over the affine lattice $\setcond{\ve x \in \Z^n}{A \ve x = \ve b}$.

\subsection{Semigroups: sparse solutions in integer programming}

Consider next the standard form of the feasibility constraints of integer linear programming
\begin{equation}\label{ILP:feasibility}
	A \ve x =\ve b, \ \ve x \in \Z_{\ge 0}^n.
\end{equation}

For a given matrix $A$, the set of all $\ve b$ such that \eqref{ILP:feasibility} is feasible, is the \emph{ semigroup}
$ \icone(A) = \{ A \ve x : \ve x \in \mathbb{Z}_{\ge0}^n \}$ generated by the columns of $A$. 
 
If \eqref{ILP:feasibility} has a solution, i.e., $\ve b\in\icone(A)$, \emph{ how sparse can such a solution be?} In other words, we are interested in the $\ell_0$-norm minimization problem
\begin{equation}\label{l0:IP}
	\min \setcond{ \|\ve x\|_0 }{A \ve x = \ve b, \ \ve x \in \Z_{\ge 0}^n}.
\end{equation}
It is clear that Problem \eqref{l0:IP} is NP-hard, because deciding the feasibility of \eqref{ILP:feasibility} \cite[\S~18.2]{Schrijver} or even solving the relaxation of  \eqref{l0:IP} with the condition ${\ve x} \in \Z_{\ge 0}^n$ replaced by ${\ve x} \in \R^n$  \cite{natarajan} is NP-hard. 

Taking the NP-hardness of Problem (\ref{l0:IP}) into account, our aim is to \emph{estimate} the optimal value of \eqref{l0:IP} under the assumption that this problem 
is feasible.  In \cite[Theorem 1.1 (i)]{ADOO} (see also \cite[Theorem 1]{Support}), it was shown that for any $\ve b \in \icone(A)$, 
there exists a $\ve x \in \Z^n$, such that $A\ve x = \ve b$ and 
\begin{equation}\label{ADNO:bound}
	\|\ve x\|_0 \le m + \floor{ \log_2 \left( \frac{\sqrt{\det(A A^\top)}}{\gcd(A)} \right) }.
\end{equation}

We show that in some special cases we can significantly improve this bound. As a consequence of Theorem~\ref{thm:sparsifying:lattice:A} we obtain the following. 
\begin{cor}
	\label{cor:pos:spanning:columns}
Let $A \in \Z^{m \times n}$ be a matrix whose columns positively span $\R^m$ and let $\ve b \in \Z^m$. Then $ \lattice(A)=\icone(A)$. Furthermore, if $\ve b \in \lattice(A)$, and  $\tau \in \binom{[n]}{m}$ is a set, for which the matrix $A_\tau$ is non-singular, then there is a solution $\ve x$ of the integer-programming feasibility problem $A \ve x  = \ve b, \ve x \in \Z_{\ge 0}^m$ that satisfies the sparsity bound
	\begin{equation}
		\label{pos:spanning:bound}
		\|\ve x\|_0 \le 2m + \Omega_m \left( \frac{|\det(A_\tau)|}{\gcd(A)} \right).
	\end{equation}
	Under the above assumptions, for given $A, \ve b$ and $\tau$, such a sparse solution $\ve x$ can be computed in polynomial time.
\end{cor}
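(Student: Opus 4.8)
The plan is to derive both assertions from Theorem~\ref{thm:sparsifying:lattice:A} together with a short sign-correction step that exploits the positive spanning hypothesis. \emph{First I would prove $\lattice(A) = \icone(A)$.} The inclusion $\icone(A) \subseteq \lattice(A)$ is immediate. For the reverse, positive spanning gives $-a_j \in \cone(A)$ for every column $a_j$ of $A$, so $-a_j = \sum_i \mu_i^{(j)} a_i$ with rationals $\mu_i^{(j)} \ge 0$; thus $\ve e_j + \ve\mu^{(j)}$ is a nonnegative rational kernel vector of $A$ whose $j$-th coordinate is $\ge 1$. Summing over $j \in [n]$ and clearing denominators produces $\ve\lambda \in \Z_{>0}^n$ with $A\ve\lambda = \ve 0$. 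Then for any $\ve b = A\ve y$ with $\ve y \in \Z^n$, the vector $\ve y + K\ve\lambda$ is a nonnegative integer solution of $A\ve x = \ve b$ once $K$ is large, so $\ve b \in \icone(A)$.

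\emph{Next, a signed sparse solution on a small column set.} Assuming now $\ve b \in \lattice(A) = \icone(A)$, apply Theorem~\ref{thm:sparsifying:lattice:A} to obtain, in polynomial time, a set $\gamma$ with $\tau \subseteq \gamma \subseteq [n]$, $|\gamma| \le m + \Omega_m(|\det(A_\tau)|/\gcd(A))$, and $\lattice(A_\gamma) = \lattice(A)$. Since $\ve b \in \lattice(A_\gamma)$, I would compute in polynomial time (via Hermite normal form) an integer vector $\ve y$ supported on $\gamma$ with $A_\gamma\ve y = \ve b$; trivially $\|\ve y\|_0 \le |\gamma|$, although $\ve y$ may have negative entries.

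\emph{Then the sign correction, costing at most $m$ extra nonzeros.} Put $\ve e := \sum_{j\in\gamma} a_j = A_\gamma\ve 1_\gamma$. Because $\cone(A) = \R^m$, Carathéodory's theorem for cones writes $-\ve e$ as a nonnegative rational combination of at most $m$ columns; clearing denominators yields $\sigma \subseteq [n]$ with $|\sigma|\le m$, a positive integer $D$, and $\ve g \in \Z_{\ge 0}^\sigma$ with $A_\sigma \ve g = -D\ve e$, all obtainable in polynomial time from a basic feasible solution of $\{A\ve f = -\ve e,\ \ve f \ge \ve 0\}$. Extending $\ve y$, $\ve g$ and the indicator vector $\ve 1_\gamma$ by zeros to $\Z^n$ and picking an integer $K > \max_{j\in\gamma}|y_j|$, set $\ve x := \ve y + DK\,\ve 1_\gamma + K\ve g$. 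Then $A\ve x = \ve b + DK\ve e - DK\ve e = \ve b$; every entry of $\ve x$ is nonnegative (on $\gamma$ it is at least $DK - \max_j|y_j| > 0$ since $D\ge 1$ and $\ve g \ge \ve 0$, and elsewhere it equals $Kg_j \ge 0$ or $0$); and $\supp(\ve x) \subseteq \gamma\cup\sigma$, whence $\|\ve x\|_0 \le |\gamma| + |\sigma| \le 2m + \Omega_m(|\det(A_\tau)|/\gcd(A))$. By Cramer-type size bounds the entries of $\ve x$ remain of polynomial bit size, so $\ve x$ is produced in polynomial time.

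\emph{Main obstacle.} The point that needs care is keeping the sign correction cheap: naively adding a large multiple of the strictly positive kernel vector $\ve\lambda$ from the first step restores nonnegativity but destroys sparsity, since $\ve\lambda$ has full support. Funnelling all the required ``negative mass'' through a single set $\sigma$ of size at most $m$ avoids this, and the only remaining subtlety — that the correcting coefficients must be \emph{integral} — is resolved by clearing denominators (the blow-up in bit size being polynomial by subdeterminant bounds) and absorbing the resulting scaling factor $D$ into the free parameter $K$.
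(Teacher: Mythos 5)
Your proposal is correct and follows essentially the same route as the paper: apply Theorem~\ref{thm:sparsifying:lattice:A} to get a signed solution supported on $\gamma$, then use Carath\'eodory's theorem for cones to adjoin at most $m$ further columns supporting a nonnegative integer kernel vector that is strictly positive on $\gamma$, and add a large multiple of it. The only (cosmetic) difference is that you apply Carath\'eodory to $-\sum_{j\in\gamma}\ve a_j$ and build the kernel vector explicitly, whereas the paper applies it to $-\sum_{i\in\tau}\ve a_i$ and argues via positive spanning of the augmented column set.
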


Note that for a fixed $m$, \eqref{pos:spanning:bound} is usually much tighter than \eqref{ADNO:bound}, because the function $\Omega_m(z)$ is bounded from above by the logarithmic function $\log_2(z)$ and is much smaller than $\log_2(z)$ on the average. Furthermore, $|\det(A_\tau)| \le \sqrt{\det(A A^\top)}$ in view of the Cauchy-Binet formula.

We take a closer look at the case $m=1$ of a single equation and tighten the given bounds in this case. 
That is, we consider the \emph{knapsack feasibility problem} 
\begin{equation}\label{knapsack_problem}
	\ve a^\top \ve x = b, \ \ve x \in \Z_{\ge 0}^n,
\end{equation}
where $\ve a \in \Z^n$ and $b \in \Z$. Without loss of generality we can assume that all components of the vector $\ve a$ are not equal to zero. It follows from (\ref{ADNO:bound}) that a feasible problem (\ref{knapsack_problem}) has a solution ${\ve x}$ with
\be\label{previous_knapsack_bound}
\|\ve x\|_0 \le 1 + \left\lfloor\log\left (\frac{\|{\ve a}\|_2}{\gcd({ \ve a})}\right )\right\rfloor\,.
\ee
If all components of $\vec a$ have the same sign, without loss of generality we can assume $\vec a \in \Z_{>0}^n$. In this setting, Theorem 1.2 in \cite{ADOO} strengthens the bound (\ref{previous_knapsack_bound}) by replacing the $\ell_2$-norm of the vector $\ve a$ with the $\ell_{\infty}$-norm. It was  conjectured in \cite[page~247]{ADOO} 
that a bound $\|\ve x\|_0 \le c + \left\lfloor\log_2\left (\|{ \ve a}\|_{\infty}/\gcd({ \ve a})\right )\right\rfloor\,$ with an absolute  constant $c$ holds for an \emph{arbitrary} $\ve a \in \Z^n$. We obtain the following result, which covers the case that has not been settled so far and yields a confirmation of this conjecture.

\begin{cor}
	\label{cor:mixed:sign:knapsack}
	Let $\ve a = (a_1,\ldots,a_n)^\top \in (\Z \setminus \{0\})^n$ be a vector that contains both positive and negative components. If the knapsack feasibility problem $\ve a^\top \ve x = b, \ \ve x \in \Z_{\ge 0}^n$ has a solution, then there is a solution $\ve x$ satisfying the sparsity bound
	\[
		\|\ve x\|_0 \le 2 + \min \setcond{ \omega \left( \frac{|a_i|}{\gcd(\ve a)}\right) }{i \in [n]}.
	\]
	Under the above assumptions, for given $\ve a$ and $b$, such a sparse solution $\ve x$ can be computed in polynomial time.
\end{cor}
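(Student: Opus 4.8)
The plan is to derive the statement directly from Corollary~\ref{cor:pos:spanning:columns}, applied with $m=1$ and $A=\ve a^\top$ regarded as a $1\times n$ integer matrix.

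First I would pass to the primitive case. Feasibility of the knapsack forces $\gcd(\ve a)\mid b$, so dividing $\ve a$ and $b$ by $d:=\gcd(\ve a)$ leaves the solution set unchanged and does not alter any of the numbers $\omega(|a_i|/\gcd(\ve a))$; hence I may assume $\gcd(\ve a)=1$. Then $\lattice(\ve a^\top)=\Z$, and feasibility gives $b\in\icone(\ve a^\top)\subseteq\lattice(\ve a^\top)$, so the ``$\ve b\in\lattice(A)$'' hypothesis is automatic. Next I would check that the columns of $\ve a^\top$, i.e.\ the scalars $a_1,\dots,a_n$, positively span $\R^1$: this holds precisely because $\ve a$ has entries of both signs, as then the nonnegative ray is covered by nonnegative multiples of any positive $a_i$ and the nonpositive ray by nonnegative multiples of any negative $a_j$.

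With the hypotheses verified, for each fixed $i\in[n]$ I would take $\tau=\{i\}$, a nonsingular $1\times1$ submatrix index set with absolute determinant $|a_i|$, and invoke Corollary~\ref{cor:pos:spanning:columns}: it yields a nonnegative integer solution $\ve x^{(i)}$ of $\ve a^\top\ve x=b$ with $\|\ve x^{(i)}\|_0\le 2+\Omega_1(|a_i|/\gcd(\ve a))=2+\omega(|a_i|/\gcd(\ve a))$. Taking the index $i$ that minimizes the right-hand side then gives the asserted bound $\|\ve x\|_0\le 2+\min_{i\in[n]}\omega(|a_i|/\gcd(\ve a))$.

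The step that needs care is the polynomial-time claim, since evaluating $\omega(|a_i|/\gcd(\ve a))$ would amount to integer factorization, which I want to avoid. I would circumvent this by not selecting the optimal $i$ in advance: instead run the polynomial-time algorithm of Corollary~\ref{cor:pos:spanning:columns} for all $n$ choices $\tau=\{i\}$ --- none of which needs to factor anything --- and output the sparsest of the returned solutions, which by the displayed inequality meets the bound. Apart from this bookkeeping and the (easy) positively-spanning check, I see no real obstacle: once Theorem~\ref{thm:sparsifying:lattice:A} and Corollary~\ref{cor:pos:spanning:columns} are available, this is a genuine corollary. If one preferred to argue from Theorem~\ref{thm:sparsifying:lattice:A} directly, the same idea unwinds to: apply it to $\ve a^\top$ with $\tau=\{i\}$ to get $\gamma\ni i$ with $\gcd(\ve a_\gamma)=1$ and $|\gamma|\le 1+\omega(|a_i|)$; if $\ve a_\gamma$ is not already of mixed sign, adjoin one more index of the opposite sign, so that $\ve a_\gamma^\top$ positively spans $\R$ and hence $b\in\icone(\ve a_\gamma^\top)$, producing a nonnegative solution whose support has size at most $2+\omega(|a_i|)$.
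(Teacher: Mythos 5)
Your proposal is correct and follows exactly the paper's route: the paper proves this corollary in one line by applying Corollary~\ref{cor:pos:spanning:columns} with $m=1$ and all $\tau=\{i\}$, $i\in[n]$, which is precisely what you do (with the hypotheses checked more explicitly and a sensible handling of the polynomial-time claim by trying all $i$ and returning the sparsest output).
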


Our next contribution is that, given additional structure on $A$, we can improve on \cite[Theorem 1.1 (i)]{ADOO}, which in turn also gives an improvement on \cite[Theorem 1.2]{ADOO}.
For $\ve a_1,\ldots,\ve a_n \in \R^m$, we denote by $\cone(\ve a_1,\ldots,\ve a_n)$ the convex conic hull of the set $\{\ve a_1,\ldots,\ve a_n\}$. 
Now assume the matrix $A = (\ve a_1,\ldots,\ve a_n) \in \Z^{m \times n}$ with columns ${\ve a}_i$ satisfies the following conditions: 
\begin{align}
& \ve a_1,\ldots,\ve a_n \in \Z^m \setminus \{\ve 0\}, \label{non_zero_col}
\\ & \text{$\cone(\ve a_1,\ldots,\ve a_n)$ is an $m$-dimensional pointed cone}, \label{m_dim_pointed_cone}
\\ & \text{$\cone(\ve a_1)$ is an extreme ray of $\cone(\ve a_1,\ldots,\ve a_n)$}. \label{extreme_ray}
\end{align}
Note that the previously best sparsity bound for the general case of the integer-programming feasibility problem is \eqref{ADNO:bound}.
Using the Cauchy-Binet formula, \eqref{ADNO:bound} can be written as
\[
	\|\ve x\|_0 \le m + \log_2 \frac{\sqrt{\sum_{I \in \binom{[n]}{m}}\det(A_I)^2}}{\gcd(A)}.
\]
The following theorem improves this bound in the \emph{``pointed cone case''} by removing a fraction of $m/n$ of terms in the sum under the square root. 

\begin{thm}\label{thm:pointed:cone}
	Let $A = (\ve a_1,\ldots,\ve a_n) \in \Z^{m \times n}$ satisfy \eqref{non_zero_col}--\eqref{extreme_ray} and, for $\ve b \in \Z^m$, consider the integer-programming feasibility problem 
	\begin{equation} \label{IP_feas}
		A \ve x =\ve b, \ \ve x \in \Z_{\ge 0}^n.
	\end{equation}
	If  \eqref{IP_feas} is feasible, then there is a feasible solution $\ve x$ satisfying the sparsity bound 
	\[
		\|\ve x\|_0 \le m + \floor{\log_2 \frac{q(A)}{\gcd(A)}},
	\] 
	where
 	\begin{equation*} 
		q(A):= \sqrt{\sum_{I \in \binom{[n]}{m} \,:\, 1 \in I} \det(A_I)^2}.
	\end{equation*}
\end{thm}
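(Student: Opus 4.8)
The plan is to reduce the ``pointed cone case'' to a lattice-sparsification argument in the spirit of Corollary~\ref{cor:pos:spanning:columns}, but executed so that the extreme ray $\cone(\ve a_1)$ is never discarded. First I would use the feasibility of \eqref{IP_feas} together with conditions \eqref{m_dim_pointed_cone}--\eqref{extreme_ray}: since $\cone(\ve a_1,\ldots,\ve a_n)$ is $m$-dimensional and pointed and $\cone(\ve a_1)$ is one of its extreme rays, the point $\ve b$, lying in this cone, can be written as $\ve b = \ve b' + t\ve a_1$ where $t \in \Z_{\ge 0}$ is chosen large enough that $\ve b'$ lies deep in the relative interior of the cone; here one exploits that $\ve a_1 \in \Z^m \setminus \{\ve 0\}$ points along an extreme ray. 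The effect is that $\ve b'$ is still in $\icone(A)$ (in fact in the interior of the cone), and any nonnegativity we lose in splitting off a large multiple of $\ve a_1$ is harmless because we add that multiple back at the end; crucially the index $1$ will be forced into the support, which is exactly why the sum defining $q(A)$ ranges only over $I \ni 1$.

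Next I would pass to the lattice level. Working with $\ve b$ (or $\ve b'$) as an element of $\lattice(A)$, I would apply Theorem~\ref{thm:sparsifying:lattice:A}, but with the choice of the initial simplex $\tau$ constrained to contain the index $1$: pick $\tau \in \binom{[n]}{m}$ with $1 \in \tau$ and $A_\tau$ nonsingular, which is possible because $\ve a_1$ spans an extreme ray and hence extends to a basis drawn from the columns. Theorem~\ref{thm:sparsifying:lattice:A} then produces $\gamma \supseteq \tau$ (so $1 \in \gamma$) with $\lattice(A) = \lattice(A_\gamma)$ and $|\gamma| \le m + \Omega_m(|\det(A_\tau)|/\gcd(A))$. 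The point of keeping $1 \in \gamma$ is that when we later bound $|\det(A_\tau)|$ via Cauchy--Binet we only ever encounter minors indexed by sets containing $1$, so the relevant quantity is $q(A)$ rather than $\sqrt{\sum_I \det(A_I)^2}$. Actually the cleaner route is to bound $|\det(A_\tau)| \le q(A)$ directly for any $\tau \ni 1$ — this is immediate since $\det(A_\tau)^2$ is one term in the sum defining $q(A)^2$ — and then to invoke $\Omega_m(z) \le \log_2(z)$ to arrive at the stated bound $m + \floor{\log_2(q(A)/\gcd(A))}$.

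The remaining work, and the main obstacle, is bookkeeping the transition from a \emph{lattice} solution back to a \emph{nonnegative} solution while preserving the support size and the role of index~$1$. After Theorem~\ref{thm:sparsifying:lattice:A} we have an integer solution supported on $\gamma$; the columns $A_\gamma$ still positively span the cone (or at least still contain the extreme ray $\ve a_1$), so I would argue as in Corollary~\ref{cor:pos:spanning:columns} that $\ve b \in \icone(A_\gamma)$, using that $\icone(A_\gamma) = \lattice(A_\gamma) \cap \cone(A_\gamma)$ has full-dimensional interior and $\ve b$ (after adding back the multiple $t\ve a_1$) sits in that interior. One must check that the correction does not inflate the support beyond $|\gamma|$: adding a multiple of $\ve a_1$ only affects the coordinate indexed by $1 \in \gamma$, so the support stays within $\gamma$, and $|\gamma| \le m + \floor{\log_2(q(A)/\gcd(A))}$ as required. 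The delicate points are (a) verifying that the pointedness and extreme-ray hypotheses really do let us choose $\tau \ni 1$ and keep $\ve b'$ interior, and (b) making sure the nonnegativity-restoration step in the style of Corollary~\ref{cor:pos:spanning:columns} costs us nothing extra here (it cost an extra $m$ there), which is precisely what the extreme-ray structure buys us.
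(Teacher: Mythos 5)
The paper itself omits the proof of Theorem~\ref{thm:pointed:cone} (it says so explicitly), so there is no in-paper argument to compare against; but the proof it does give for the $m=1$ instance, Theorem~\ref{thm:positive:knapsack}, indicates the intended technique, which is quite different from yours: one starts from a \emph{non-negative} solution minimizing the support outside index $1$ and shrinks that support by adding a kernel vector whose entries off the first coordinate lie in $\{-1,0,1\}$, produced by Minkowski's first theorem; the quantity $q(A)$ then arises as a Cauchy--Binet volume attached to $\ve a_1$ together with the remaining support columns. Your route through Theorem~\ref{thm:sparsifying:lattice:A} is fine on the lattice side (a nonsingular $\tau\ni 1$ exists, $|\det(A_\tau)|\le q(A)$, and $\Omega_m(z)\le\floor{\log_2 z}$), but the step you yourself flag as the main obstacle --- turning the lattice solution supported on $\gamma$ into a non-negative one without enlarging the support --- is a genuine gap, and the mechanisms you propose for it do not work.

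Concretely: (i) the decomposition $\ve b=\ve b'+t\ve a_1$ with $\ve b'$ ``deep in the relative interior'' is backwards: subtracting a large multiple of $\ve a_1$ pushes $\ve b'$ \emph{out} of the pointed cone, in the direction $-\ve a_1\notin\cone(A)$, not into its interior. (ii) The set $\gamma$ is chosen only so that $\lattice(A_\gamma)=\lattice(A)$; this gives no control over $\cone(A_\gamma)$, and in general $\ve b\notin\cone(A_\gamma)$, a defect that adding multiples of $\ve a_1$ cannot repair (take $\cone(A_\gamma)=\cone((1,0)^\top,(1,1)^\top)$ and $\ve b=(0,-1)^\top$: then $\ve b+t\ve a_1=(t,-1)^\top$ never enters the cone). (iii) Even when $\ve b\in\cone(A_\gamma)\cap\lattice(A_\gamma)$, membership in the semigroup $\icone(A_\gamma)$ does not follow. (iv) Most decisively, the correction device of Corollary~\ref{cor:pos:spanning:columns} --- adding $N\ve x'$ with $A\ve x'=\ve 0$, $\ve x'\ge\ve 0$ --- is unavailable here \emph{in principle}: given \eqref{non_zero_col}, pointedness of $\cone(\ve a_1,\ldots,\ve a_n)$ is equivalent to the non-existence of a non-trivial non-negative kernel vector, which is precisely why Corollary~\ref{cor:pos:spanning:columns} needs the positive-spanning hypothesis and pays an extra $m$. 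A further warning sign: if your argument were repairable as stated, it would yield the stronger bound $m+\Omega_m(|\det(A_\tau)|/\gcd(A))$, which the authors conspicuously do not claim.
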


We omit the proof of this result due to the page limit for the IPCO proceedings.
Instead we focus on the particularly interesting case  $m=1$. In this case,  assumption~\eqref{m_dim_pointed_cone} is equivalent to $\ve a \in \Z_{>0}^n \cup \Z_{<0}^n$. Without loss of generality, one can assume $\ve a \in \Z_{>0}^n$.

\begin{thm}
	\label{thm:positive:knapsack}
	Let $\ve a = (a_1,\ldots,a_n)^\top \in \Z_{>0}^n$ and $b \in \Z_{\ge 0}$. If the knapsack feasibility problem $\ve a^\top \ve x = b, \  \ve x \in \Z_{\ge 0}^n$ has a solution, there is a solution $\ve x$ satisfying the sparsity bound
	\begin{equation*}
		\|\ve x\|_0 \le 1 + \floor{\log_2 \left( \frac{\min \{a_1,\ldots,a_n\}}{\gcd(\ve a)} \right)}.
	\end{equation*}
\end{thm}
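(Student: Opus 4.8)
The plan is to first reduce to $\gcd(\ve a)=1$ and then, after reindexing so that $a_1=\min_i a_i$, to produce a sparse solution by an explicit iterative support-shrinking procedure that keeps the first coordinate $x_1$ in reserve. For the reduction: feasibility forces $\gcd(\ve a)\mid b$, so after dividing $\ve a$ and $b$ by $g:=\gcd(\ve a)$ we may assume $\gcd(\ve a)=1$, and then $\min_i a_i$ equals the quantity $\min_i a_i/\gcd(\ve a)$ that appears in the bound. So assume $\gcd(\ve a)=1$ and $a_1=\min_i a_i$ from now on; it then suffices to find a feasible $\ve x$ with $\|\ve x\|_0\le 1+\floor{\log_2 a_1}$.

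Start from any feasible $\ve x^{(0)}$ and let $\ve y\in\Z_{\ge 0}^n$ be obtained from $\ve x^{(0)}$ by zeroing out the first coordinate, so $\supp(\ve y)\subseteq\{2,\dots,n\}$. Write $S(\ve y):=\sum_{i=2}^n a_iy_i$. We will maintain the three invariants $\ve y\ge\mathbf 0$ with $\supp(\ve y)\subseteq\{2,\dots,n\}$, $S(\ve y)\le b$, and $S(\ve y)\equiv b\pmod{a_1}$; all three hold initially because $S(\ve y)=b-a_1x^{(0)}_1$.

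Now iterate the following move as long as $2^{|\supp(\ve y)|}>a_1$. Among the $2^{|\supp(\ve y)|}$ subsets $U\subseteq\supp(\ve y)$, two distinct ones have $\sum_{i\in U}a_i$ congruent modulo $a_1$; passing to their symmetric difference produces disjoint sets $T_1,T_2\subseteq\supp(\ve y)$, not both empty, with $\sum_{i\in T_1}a_i\equiv\sum_{i\in T_2}a_i\pmod{a_1}$. After possibly swapping $T_1$ and $T_2$ we may assume $\sum_{i\in T_1}a_i\ge\sum_{i\in T_2}a_i$, and then $T_1\ne\varnothing$ since all $a_i>0$. Let $k:=\min_{i\in T_1}y_i\ge 1$ and replace $\ve y$ by $\ve y-k\mathbf 1_{T_1}+k\mathbf 1_{T_2}$. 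A direct check shows the new $\ve y$ is $\ge\mathbf 0$, that $S(\ve y)$ does not increase (it changes by $-k(\sum_{i\in T_1}a_i-\sum_{i\in T_2}a_i)\le 0$), that $S(\ve y)\bmod a_1$ is unchanged, and that $|\supp(\ve y)|$ strictly decreases (the coordinate attaining the minimum $k$ becomes $0$, while no coordinate outside $\supp(\ve y)$ is touched because $T_1\cup T_2\subseteq\supp(\ve y)$). Since $|\supp(\ve y)|$ is a nonnegative integer, the loop terminates with $2^{|\supp(\ve y)|}\le a_1$, i.e.\ $|\supp(\ve y)|\le\floor{\log_2 a_1}$. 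Finally set $x_1:=(b-S(\ve y))/a_1$, which is a nonnegative integer by the last two invariants, and $x_i:=y_i$ for $i\ge 2$; then $\ve a^\top\ve x=a_1x_1+S(\ve y)=b$, $\ve x\ge\mathbf 0$, and $\|\ve x\|_0\le 1+|\supp(\ve y)|\le 1+\floor{\log_2 a_1}$, which is the claimed bound once the normalization of the first step is undone.

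The crux of the argument is engineering the move in the third step to do three things at once: preserve the congruence $S(\ve y)\equiv b\pmod{a_1}$ (which is what forces $T_1,T_2$ to have equal residues, hence the pigeonhole threshold $2^{|\supp(\ve y)|}>a_1$); not push $S(\ve y)$ above $b$ (which is what forces the orientation $\sum_{i\in T_1}a_i\ge\sum_{i\in T_2}a_i$); and genuinely eliminate a coordinate rather than merely shuffling weight (which is what forces the step size to be the full $k=\min_{i\in T_1}y_i$ and the sets $T_1,T_2$ to sit inside the current support). Everything else — termination, nonnegativity of $x_1$, and the final bound — is then routine. We remark that Theorem~\ref{thm:positive:knapsack} is in fact the special case $m=1$ of Theorem~\ref{thm:pointed:cone}, obtained by taking the index of a minimal column as the distinguished index ``$1$'', since then $q(A)=a_1$; the argument above is a self-contained proof of this case.
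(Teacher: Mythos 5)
Your proof is correct, and it reaches the paper's bound by a genuinely more elementary route for the key step. The overall architecture is the same as the paper's: keep the minimal coordinate $a_1$ as a slack variable and repeatedly cancel support among the remaining coordinates using an integer relation $\sum_{i\in T_1}a_i-\sum_{i\in T_2}a_i\in a_1\Z_{\ge 0}$ with $T_1,T_2$ disjoint subsets of the current support. The difference lies in how that relation is produced. The paper isolates it as Lemma~\ref{basic:lemma} and proves it with Minkowski's first theorem applied to a parallelepiped of volume $4^t/(2a_1)$, then runs a minimal-counterexample argument on a solution minimizing the support outside index $1$. You obtain the same relation by pigeonhole on the $2^{|\supp(\ve y)|}$ subset sums modulo $a_1$ and then passing to the symmetric difference of the two colliding subsets; the resulting sign pattern ($+1$ on $T_2$, $-1$ on $T_1$, a nonnegative multiple of $a_1$ absorbed into $x_1$) is exactly the vector $\ve y$ that the paper's lemma delivers, and the thresholds coincide ($|\supp(\ve y)|>\log_2 a_1$ in both cases), so the bounds are identical. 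What your version buys is the avoidance of geometry of numbers and an explicitly terminating procedure with clean invariants ($\ve y\ge\ve 0$, $S(\ve y)\le b$, $S(\ve y)\equiv b\ (\mathrm{mod}\ a_1)$) that replace the paper's choice of the step size $k$ and its sign bookkeeping; what it gives up is only the reusable geometric lemma, which the paper's approach generalizes more readily to $m>1$ (Theorem~\ref{thm:pointed:cone}). Your closing observation that the statement is the $m=1$ case of Theorem~\ref{thm:pointed:cone} with $q(A)=\min_i a_i$ agrees with how the paper positions the result.
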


\noindent When dealing with bounds for sparsity it would be interesting to understand \emph{the worst case scenario among all members of the semigroup}, which is described by the function
\begin{equation}\label{def:ICR}
	\ICR(A) = \max_{ \ve b \in \icone(A)} \min\{ \| {\ve x} \|_0 \,:\,  A \ve x = \ve b, \ {\ve x} \in \ZZ_{\ge 0}^n\}.
\end{equation}
We call $\ICR(A)$ the \emph{integer Carath\'eodory rank} in resemblance to the classical problem of finding the integer Carath\'eodory number for Hilbert bases \cite{SeboCaratheodory}. Above results for the problem $A \ve x =b, \ \ve x \in \Z_{\ge 0}^n$ can be phrased as upper bounds on $\ICR(A)$. We are interested in the complexity of computing $\ICR(A)$.
 The first question is: \emph{can the integer Carath\'eodory rank of a matrix $A$ be computed at all?} After all, remember that the semigroup has infinitely many elements and, despite the fact that $\ICR(A)$ is a finite number, a direct usage of \eqref{def:ICR} would result into the determination of the sparsest representation $A \ve x = \ve b$ for all of the infinitely many elements $\ve b$ of $\icone(A)$. It turns out that $\ICR(A)$ is computable, as the inequality $\ICR(A) \le k$ can be expressed as the formula $\forall \ve x \in \Z_{\ge 0}^n \, \exists \ve y \in \Z_{\ge 0}^n \,: \, (A \ve x = A \ve y) \wedge (\|\ve y\|_0 \le k )$ in \emph{Presburger arithmetic} \cite{haase2018survival}.  Beyond this fact, the complexity status of computing $\ICR(A)$ is largely open, even when $A$ is just one row:

\begin{prob} \label{prob:ICR}
	Given the input $\ve a = (a_1,\ldots,a_n)^\top \in \Z^n$, is it NP-hard to compute $\ICR(\ve a^\top)$?
\end{prob}

The \emph{Frobenius number} $\max\,  \Z_{\ge 0} \setminus \icone(\ve a^\top)$, defined under the assumptions $\ve a \in \Z_{>0}^n$ and $\gcd({\ve a})=1$, is yet another value associated to 
$\icone(\ve a^\top)$. The Frobenius number can be computed in polynomial time when $n$ is fixed \cite{barvinok-woods-2003,kannan1992lattice} but is NP-hard to compute
when $n$ is not fixed \cite{ramirez1996complexity}. It seems that there might be a connection between computing the Frobenius number and $\ICR(\ve a^\top)$.

\section{Proofs of Theorem~\ref{thm:sparsifying:lattice:A} and its consequences}

The proof Theorem~\ref{thm:sparsifying:lattice:A} relies on the theory of finite Abelian groups. 	
We write Abelian groups additively. An Abelian group $G$ is said to be a \emph{direct sum} of its finitely many subgroups $G_1,\ldots,G_m$, which is written as $G = \bigoplus_{i=1}^m G_i$, if every element $x \in G$ has a unique representation as $x = x_1 + \cdots + x_m$ with $x_i \in G_i$ for each $i \in [m]$. A \emph{primary cyclic group} is a non-zero finite cyclic group whose order is a power of a prime number. 
We use $G / H$ to denote the quotient of $G$ modulo its subgroup $H$.

	The fundamental theorem of finite Abelian groups states that every finite Abelian group $G$ has a \emph{primary decomposition}, which is essentially unique. This means, $G$ is decomposable into a direct sum of its primary cyclic groups and that this decomposition is unique up to automorphisms of $G$. We denote by $\kappa(G)$ the number of direct summands in the primary decomposition of $G$.  
	
For a subset $S$ of a finite Abelian group $G$, we denote by $\gen{S}$ the subgroup of $G$ generated by $S$. We call a subset $S$ of $G$ \emph{non-redundant} if the subgroups $\gen{T}$ generated by proper subsets $T$ of $S$ are properly contained in $\gen{S}$. In other words, $S$ is non-redundant if $\gen{S \setminus \{x\}}$ is a proper subgroup of $\gen{S}$ for every $x \in S$.  The following result can be found in  \cite[Lemma~A.6]{Geroldinger+HalterKoch}.
\begin{thm}
	\label{thm:max:card:nonredundant}
	Let $G$ be a finite Abelian group. Then
	the maximum cardinality of a non-redundant subset $S$ of $G$ is equal to $\kappa(G)$. 
\end{thm}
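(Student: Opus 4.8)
The plan is to prove the two inequalities separately. For the lower bound, write $G=\bigoplus_{i=1}^{\kappa(G)}C_i$ with each $C_i$ primary cyclic, choose a generator $g_i$ of $C_i$, and set $S=\{g_1,\ldots,g_{\kappa(G)}\}$. Then $\gen{S\setminus\{g_i\}}=\bigoplus_{j\neq i}C_j$ is a proper subgroup of $G$ not containing $g_i$, so $S$ is non-redundant of cardinality $\kappa(G)$; hence the maximum is at least $\kappa(G)$.

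For the upper bound, let $S=\{s_1,\ldots,s_k\}$ be non-redundant; I would first reduce to the case of $p$-groups. Decompose $G=\bigoplus_p G_p$ into its $p$-primary components and write each $x\in G$ as $x=\sum_p x^{(p)}$. Since every subgroup of $G$ splits compatibly with this decomposition, $\gen{s_j:j\neq i}=\bigoplus_p\gen{s_j^{(p)}:j\neq i}$, so the condition $s_i\notin\gen{s_j:j\neq i}$ forces, for each $i$, a prime $p(i)$ with $s_i^{(p(i))}\notin\gen{s_j^{(p(i))}:j\neq i}$. Grouping indices by the chosen prime, $[k]=\bigsqcup_p I_p$ with $I_p=\{i:p(i)=p\}$, one checks that $T_p:=\{s_i^{(p)}:i\in I_p\}$ consists of $|I_p|$ distinct elements and is non-redundant in $G_p$: removing $s_i^{(p)}$ leaves a subgroup contained in $\gen{s_j^{(p)}:j\neq i}$, which misses $s_i^{(p)}$. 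As the primary decomposition of $G$ is the union of those of the $G_p$, we have $\sum_p\kappa(G_p)=\kappa(G)$, so it suffices to prove $|T|\leq\kappa(K)$ for every non-redundant subset $T$ of a finite abelian $p$-group $K$.

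For a $p$-group $K$, put $d:=\dim_{\F_p}K/pK$, which equals $\kappa(K)$. Replacing $K$ by $\gen{T}$---legitimate because a subgroup of a finite abelian group has no more primary cyclic summands than the group, so $\kappa(\gen{T})\leq\kappa(K)$---I may assume $\gen{T}=K$. Suppose $|T|>d$. Then, whether or not the images of the elements of $T$ in $K/pK$ are pairwise distinct, some $s\in T$ has its image in the $\F_p$-span of the images of $T\setminus\{s\}$; lifting this relation gives $s\in\gen{T\setminus\{s\}}+pK$. Since $\gen{T}=K$, we have $pK=\gen{pt:t\in T}\subseteq\gen{T\setminus\{s\}}+\gen{ps}$, hence $s=h+mps$ for some $h\in\gen{T\setminus\{s\}}$ and $m\in\Z$, that is $(1-mp)s=h$. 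Because $1-mp$ is coprime to $\ord(s)$, which is a power of $p$, multiplying by an inverse of $1-mp$ modulo $\ord(s)$ yields $s\in\gen{T\setminus\{s\}}$, contradicting non-redundancy. Therefore $|T|\leq d=\kappa(K)$, and summing $|I_p|=|T_p|\leq\kappa(G_p)$ over all primes gives $k\leq\sum_p\kappa(G_p)=\kappa(G)$.

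The step I expect to demand the most care is the reduction to $p$-groups: non-redundancy is not inherited by the coordinate projections $G\to G_p$, so one must select, element by element, a prime that witnesses non-redundancy and then verify that the resulting family of $p$-components really is non-redundant in $G_p$ and has undiminished cardinality. The $p$-group case is then a short linear-algebra argument over $\F_p$, whose only delicate point is absorbing the $pK$-term by exploiting that $1-mp$ is a unit modulo a power of $p$.
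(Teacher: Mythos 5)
Your proof is correct. Note, however, that the paper does not prove this statement at all: it is quoted as a known result with a citation to \cite[Lemma~A.6]{Geroldinger+HalterKoch}, so there is no in-paper argument to compare against. What you supply is a complete, self-contained proof along the standard lines: the lower bound by exhibiting the generators of the primary cyclic summands as a non-redundant set, and the upper bound by (i) splitting $G$ into its $p$-primary components, choosing for each $s_i$ a prime $p(i)$ witnessing $s_i\notin\gen{s_j : j\neq i}$, and checking that the resulting $p$-component families are distinct and non-redundant in $G_p$ (this uses, correctly, that $\gen{s_j : j\in J}\cap G_p=\gen{s_j^{(p)} : j\in J}$ because each $s_j^{(p)}$ is an integer multiple of $s_j$ --- you might state that one line explicitly); and (ii) the Burnside-basis/Nakayama argument over $\F_p$ for a $p$-group $K$, where $\kappa(K)=\dim_{\F_p}K/pK$ and the term from $pK$ is absorbed because $1-mp$ is a unit modulo $\ord(s)$. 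All the delicate points you flag --- that non-redundancy does not pass to projections wholesale and must be routed through a chosen witness prime, and that $\kappa(\gen{T})\le\kappa(K)$ for a subgroup of a $p$-group (via $\dim_{\F_p}H[p]\le\dim_{\F_p}K[p]$, say) --- are handled or at least correctly identified. The trade-off is simply self-containedness versus brevity: the paper buys a one-line statement at the cost of an external reference, while your argument makes the result elementary and, as a bonus, makes visible why the extremal sets are exactly generating sets of the primary summands, which is the structure exploited in the paper's optimality remark after Theorem~\ref{thm:sparsifying:lattice:A}.
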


We will also need the following lemmas. 
	
	\begin{lemma}
		\label{lem:kappa:for:sum:cyclic}
		Let $G$ be a finite Abelian group representable as a direct sum $G = \bigoplus_{j=1}^m G_j$ of $m \in \Z_{>0}$ cyclic groups. Then $\kappa(G) \le \Omega_m(|G|)$.
	\end{lemma}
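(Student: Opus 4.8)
The plan is to show that $\kappa$ is additive over direct sums of cyclic groups and then to compare the two sides prime by prime.

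First I would record the additivity. If each $G_j$ has primary decomposition $G_j = \bigoplus_{\ell} C_{j,\ell}$ into primary cyclic groups, then $G = \bigoplus_{j=1}^m \bigoplus_\ell C_{j,\ell}$ is itself a decomposition of $G$ into primary cyclic groups, so by the uniqueness part of the fundamental theorem of finite Abelian groups $\kappa(G) = \sum_{j=1}^m \kappa(G_j)$. Next, for a cyclic group $G_j$ of order $n_j := |G_j|$, the Chinese Remainder Theorem gives $G_j \cong \bigoplus_{p \mid n_j} \Z/p^{v_p(n_j)}\Z$, where the sum ranges over the primes $p$ dividing $n_j$ and $v_p$ denotes the $p$-adic valuation; hence $\kappa(G_j) = \omega(n_j)$ is the number of distinct prime divisors of $n_j$ (with $\kappa(G_j) = 0$ when $n_j = 1$). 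Combining these, $\kappa(G) = \sum_{j=1}^m \omega(n_j)$, and note $|G| = \prod_{j=1}^m n_j$.

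Then I would fix a prime $p$ dividing $|G|$ and count its contribution on each side. On the left, $p$ contributes to $\omega(n_j)$ exactly when $p \mid n_j$, so its total contribution to $\sum_j \omega(n_j)$ is $N_p := \bigl|\{\, j \in [m] : p \mid n_j \,\}\bigr|$. On the right, writing $s_p := v_p(|G|) = \sum_{j=1}^m v_p(n_j)$, the prime $p$ contributes $\min\{s_p, m\}$ to $\Omega_m(|G|)$. Since there are only $m$ indices $j$ available, $N_p \le m$; and since every $j$ counted in $N_p$ satisfies $v_p(n_j) \ge 1$, also $N_p \le \sum_{j} v_p(n_j) = s_p$. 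Hence $N_p \le \min\{s_p, m\}$, and summing over all primes $p$ dividing $|G|$ gives $\kappa(G) = \sum_p N_p \le \sum_p \min\{s_p, m\} = \Omega_m(|G|)$.

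This is essentially careful bookkeeping, so I do not anticipate a genuine obstacle; the two points that deserve attention are the additivity $\kappa(G) = \sum_j \kappa(G_j)$, which relies on uniqueness of the primary decomposition, and the observation that the truncation threshold $m$ in $\Omega_m$ is precisely matched by the bound ``there are at most $m$ summands $G_j$.'' Degenerate cases such as some $G_j$ being trivial, or $|G| = 1$, require no separate treatment since the corresponding terms simply vanish.
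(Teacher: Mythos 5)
Your proof is correct and follows essentially the same route as the paper: both decompose each cyclic summand $G_j$ via the Chinese Remainder Theorem into primary cyclic pieces and then, for each prime $p$, bound the number of non-trivial $p$-power summands by both $m$ (at most one per $G_j$) and the exponent of $p$ in $|G|$. Your explicit appeal to uniqueness for the additivity $\kappa(G)=\sum_j \kappa(G_j)$ is a slightly more careful phrasing of a step the paper leaves implicit, but the argument is the same.
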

\begin{proof}
		Consider the prime factorization $|G| = p_1^{n_1} \cdots p_s^{n_s}$. Then $|G_j| = p_1^{n_{i,j}} \cdots p_s^{n_{i,j}}$ with $0 \le n_{i,j} \le n_i$ and, by the Chinese Remainder Theorem, the cyclic group $G_j$ can be represented as $G_j = \bigoplus_{i=1}^s G_{i,j}$, where $G_{i,j}$ is a cyclic group of order $p_i^{n_{i,j}}$. Consequently, $G = \bigoplus_{i=1}^s \bigoplus_{j=1}^m G_{i,j}$.
		This is a decomposition of $G$ into a direct sum of primary cyclic groups and, possibly, some  trivial summands $G_{i,j}$ equal to $\{0\}$. We can count the non-trivial direct summands whose order is a power of $p_i$, for a given $i \in [s]$. There is at most one summand like this for each of the groups $G_j$. So, there are at most $m$ non-trivial summands in the decomposition whose order is a power of $p_i$. On the other hand, the direct sum of all non-trivial summands whose order is a power of $p_i$ is a group of order $p_i^{n_{i,1} + \cdots + n_{i,s}} = p_i^{n_i}$ so that the total number of such summands is not larger than $n_i$, as every summand contributes the factor at least $p_i$ to the power $p_i^{n_i}$. This shows that the total number of non-zero summands in the decomposition of $G$ is at most $\sum_{i=1}^s \min \{m, n_i\} = \Omega_m(|G|)$.
\end{proof}	
	

	\begin{lemma}
		\label{lem:group:from:lattice}
		Let $\Lambda$ be a sublattice of $\Z^m$ of rank $m \in \Z_{>0}^m$. Then $G = \Z^m / \Lambda$ is a finite Abelian group of order $\det(\Lambda)$ that can be represented as a direct sum of at most $m$ cyclic groups.
	\end{lemma}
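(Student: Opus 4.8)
The plan is to recall the standard structure theory for sublattices of $\Z^m$ of full rank and read off both claims directly. First I would invoke the Smith normal form: since $\Lambda$ has rank $m$, there is a basis $\ve b_1,\ldots,\ve b_m$ of $\Z^m$ and positive integers $d_1 \mid d_2 \mid \cdots \mid d_m$ such that $d_1 \ve b_1, \ldots, d_m \ve b_m$ is a basis of $\Lambda$. Equivalently, writing $B \in \Z^{m\times m}$ for a matrix whose columns form a basis of $\Lambda$, one has $B = U D V$ with $U, V \in \mathrm{GL}_m(\Z)$ and $D = \diag(d_1,\ldots,d_m)$. This is the main structural input, and it is standard, so there is no real obstacle here.

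From the unimodular change of coordinates given by $U$, the quotient $\Z^m/\Lambda$ is isomorphic to $\Z^m / (D\Z^m) \cong \bigoplus_{j=1}^m \Z/d_j\Z$. This is a direct sum of (at most) $m$ cyclic groups, proving the second assertion, with the understanding that summands with $d_j = 1$ are trivial and may be dropped. For the order, $|G| = \prod_{j=1}^m d_j = |\det D| = |\det B| = \det(\Lambda)$, where the last equality is the definition of the lattice determinant (its positivity following from the full-rank assumption). That $\det(\Lambda)$ is finite and nonzero is exactly what guarantees $G$ is finite.

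I would present this concisely, perhaps a few lines: state that full rank lets us apply the Smith normal form, give the isomorphism $\Z^m/\Lambda \cong \bigoplus_{j=1}^m \Z/d_j\Z$, and compute the order via the determinant. No step requires detailed calculation; the only thing to be a little careful about is phrasing ``at most $m$'' to allow trivial summands, and making sure the reader sees that $\det(\Lambda) = \prod d_j$ is immediate from $D$ being the Smith form of a basis matrix of $\Lambda$. I do not anticipate a genuine obstacle; this lemma is a packaging of classical facts that will feed into combining Theorem~\ref{thm:max:card:nonredundant} and Lemma~\ref{lem:kappa:for:sum:cyclic} to bound $\kappa(\Z^m/\Lambda)$.
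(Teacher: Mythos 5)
Your proposal is correct and follows essentially the same route as the paper: both use the Smith normal form of a basis matrix of $\Lambda$, transport the quotient through the unimodular change of coordinates to get $\Z^m/\Lambda \cong \bigoplus_{j=1}^m \Z/d_j\Z$, and read off the order as $d_1\cdots d_m = \det(\Lambda)$. The only cosmetic difference is that you state the normal form via an adapted basis $d_1\ve b_1,\ldots,d_m\ve b_m$ of $\Lambda$ (and with the divisibility chain, which is not needed), whereas the paper writes $D = UMV$ explicitly and spells out the induced isomorphism of quotients.
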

	
\begin{proof}
		The proof relies on the relationship of finite Abelian groups and lattices, see \cite[\S4.4]{Schrijver}. 
		Fix a matrix $M \in \Z^{m \times m}$ whose columns form a basis of $\Lambda$. Then $|\det(M)| = \det(\Lambda)$. There exist unimodular matrices $U \in \Z^{m \times m}$ and $V \in \Z^{m \times m}$ such that $D:= U M V$ is diagonal matrix with positive integer diagonal entries. For example, one can choose $D$ to be the Smith Normal Form of $M$  \cite[\S4.4]{Schrijver}. Let $d_1,\ldots,d_m \in \Z_{>0}$ be the diagonal entries of $D$.   Since $U$ and $V$ are unimodular, $d_1 \cdots d_m = \det(D) = \det(\Lambda)$. 
		
		We introduce the quotient group $G' := \Z^m / \Lambda' = ( \Z / d_1 \Z) \times \cdots \times (\Z / d_m \Z)$
		with respect to the lattice $\Lambda' := \lattice(D) = (d_1 \Z) \times \cdots \times (d_m \Z)$.
		The order of $G'$ is $d_1 \cdots d_m = \det(D) = \det(\Lambda)$ and $G'$ is a direct sum of at most $m$ cyclic groups, as every $d_i > 1$ determines a non-trivial direct summand. 
		
		To conclude the proof, it suffices to show that $G'$ is isomorphic to $G$. To see this, note that $\Lambda' = \lattice(D) = \lattice(UMV) = \lattice(UM) = \setcond{U z}{z \in \Lambda}$.
		Thus, the map $z \mapsto U z$ is an automorphism of  $\Z^m$ and an isomorphism from  $\Lambda$ to $\Lambda'$. Thus, $z \mapsto U z$ induces an isomorphism from the group $G = \Z^m / \Lambda$ to the group $G'=\Z^m / \Lambda'$. 
	\end{proof}	
%
%
%

	\begin{proof}[Proof of Theorem~\ref{thm:sparsifying:lattice:A}]
			Let $\vec a_1,\ldots,\vec a_n$ be the columns of $A$. Without loss of generality, let $\tau = [m]$. We use the notation $B:=A_\tau$.
			
			\emph{Reduction to the case $\gcd(A)=1$.} For a non-singular square matrix $M$, the columns of $M^{-1} A$ are representations of the columns of $A$ in the basis of columns of $M$. In particular, for a matrix $M$ whose columns form a basis of $\lattice(A)$, the matrix $M^{-1} A$ is integral and the $m \times m$ minors of $M^{-1} A$ are the respective $m \times m$ minors of $A$ divided by $\det(M) = \gcd(A)$. Thus, replacing $A$ by $M^{-1} A$, we pass from $\lattice(A)$ to $\lattice(M^{-1} A) = \setcond{M^{-1} z}{z \in \lattice(A)}$, which corresponds to a change of a coordinate system in $\R^m$ and ensures that $\gcd(A)=1$. 
			
			\emph{Sparsity bound \eqref{card:gamma:bound}.} The matrix $B$ gives rise to the lattice $\Lambda := \lattice(B)$ of rank $m$, while $\Lambda$ determines the finite Abelian group $\Z^m / \Lambda$. 
			
			Consider the canonical homomorphism $\phi : \Z^m \to \Z^m / \Lambda$, sending an element of $\Z^m$ to its coset modulo $\Lambda$.  Since $\gcd(A)=1$, we have $\lattice(A)= \Z^m$, which implies
			\(
				\gen{ T } = \Z^m / \Lambda
			\)
			for $T:= \{\phi(\vec a_{m+1}),\ldots, \phi(\vec a_n) \}$. For every non-redundant subset $S$ of $T$, we have 
			\begin{align*}
				|S| & \le \kappa( \Z^m / \Lambda) & & \text{(by Theorem~\ref{thm:max:card:nonredundant})}
				\\ & \le \Omega_m(|\det(A_\tau)|) & & \text{(by Lemmas~\ref{lem:kappa:for:sum:cyclic} and \ref{lem:group:from:lattice}).}
			\end{align*}
			Fixing a set $I \subseteq \{m+1,\ldots,n\}$ that satisfies $|I|=|S|$ and $S =  \setcond{\phi(\vec a_i)}{i \in I}$, we reformulate 
			$\gen{S} = \Z^m / \Lambda$ as 
			\(
				\Z^m = \lattice(A_I) + \Lambda = \lattice(A_I) + \lattice(A_\tau) = \lattice(A_{I \cup \tau}).
			\)
			Thus, \eqref{card:gamma:bound} holds for $\gamma = I \cup \tau$.
			
			\emph{Construction of $\gamma$ in polynomial time.} The matrix $M$ used in the reduction to the case $\gcd(A) = 1$ can be constructed in polynomial time: one can obtain $M$ from the Hermite Normal Form of $A$ (with respect to the column transformations) by discarding zero columns. For the determination of $\gamma$, the set $I$ that defines the non-redundant  subset $S = \setcond{\phi(\vec a_i)}{i \in I}$ of $\Z^m / \Lambda$ needs to be determined. Start with $I = \{m+1,\ldots,n\}$ and iteratively check if some of the elements $\phi(\vec a_i) \in \Z^m / \Lambda$, where $i \in I$, is in the group generated by the remaining elements. 
			Suppose $j \in I$ and we want to check if $\phi(\vec a_j)$ is in the group generated by all $\phi(\vec a_i)$ with $i \in I\setminus \{j\}$. Since $\Lambda = \lattice(A_\tau)$, this is equivalent to checking $\vec a_j \in \lattice(A_{I \setminus \{j\} \cup \tau})$ and is thus reduced to solving a system of linear Diophantine equations with the left-hand side matrix $A_{I \setminus \{j\} \cup \tau}$ and the right-hand side vector $\vec a_j$. Thus, carrying the above procedure for every $j \in I$ and removing $j$ from $I$ whenever $\vec a_j \in \lattice(A_{I \setminus \{j\} \cup \tau})$, we eventually arrive at a set $I$ that determines a non-redundant subset $S$ of $\Z^m / \Lambda$. This is done by solving at most $n-m$ linear Diophantine systems in total, where the matrix of each system is a sub-matrix of $A$ and the right-hand vector of the system is a column of $A$. 
\end{proof}
	
	\begin{remark}[Optimality of the bounds]
		For a given $\Delta \in \Z_{\ge 2}$ let us consider matrices $A \in \Z^{m \times n}$ 
		with $\Delta = |\det(A_\tau)| / \gcd(A)$. We construct a matrix $A$ that shows the optimality of the bound \eqref{card:gamma:bound}.
				As in the proof of Theorem~\ref{thm:sparsifying:lattice:A}, we assume $\tau =[m]$ and use the notation $B = A_\tau$.
		Consider the prime factorization $\Delta = p_1^{n_1} \cdots p_s^{n_s}$.  We will fix the matrix $B$ to be a diagonal matrix with diagonal entries $d_1,\ldots,d_m \in \Z_{>0}$ so that $\det(B) = d_1 \cdots d_m = \Delta$. 
		
		The diagonal entries are defined by distributing the prime factors of $\Delta$ among the diagonal entries of $B$. 
		If the multiplicity $n_i$ of the prime $p_i$ is less than $m$, we introduce $p_i$ as a factor of multiplicity $1$ in $n_i$ of the $m$ diagonal entries of $B$. If the multiplicity $n_i$ is at least $m$, we are able distribute the factors $p_i$ among \emph{all} of the diagonal entries of $B$ so that each diagonal entry contains the factor $p_i$ with multiplicity at least $1$. 
		
		The group $\Z^m / \Lambda = \Z^m / \lattice(B)$ is a direct sum of $m$ cyclic groups $G_1,\ldots, G_m$ of orders $d_1,\ldots,d_m$, respectively. By the Chinese Remainder Theorem, these cyclic groups can be further decomposed into the direct sum of primary cyclic groups. By our construction, the prime factor $p_i$ of the multiplicity $n_i < m$ generates a cyclic direct summand of order $p_i$ in $n_i$ of the subgroups $G_1,\ldots,G_m$. If $n_i \ge m$, then each of the groups $G_1,\ldots,G_m$ has a direct summand, which is a non-trivial cyclic group whose order is a power of $p_i$. Summarizing, we see that the decomposition of $\Z^m / \Lambda$ into primary cyclic groups contains $n_i$ summands of order $p_i$, when $n_i < m$, and $m$ summands, whose order is a power of $p_i$, when $n_i \ge m$. The total number of summands is thus $\sum_{i=1}^s \min \{m, n_i\} = \Omega_m(\Delta)$.
		
		Now, fix $n= m + \Omega_m(\Delta)$ and choose columns $\vec a_{m+1},\ldots,\vec a_n$ so that $\phi(\vec a_{m+1}),$ $\ldots, \phi(\vec a_n)$ generate all direct summands in the decomposition of $\Z^m / \Lambda$ into primary cyclic groups. With this choice, $\phi(\vec a_{m+1}),\ldots, \phi(\vec a_n)$ generate $\Z^m / \Lambda$, which means that $\lattice(A) = \Z^m$ and implies $\gcd(A)=1$. On the other hand, any proper subset $\{\phi(\vec a_{m+1}),\ldots, \phi(\vec a_n)\}$ generates a proper subgroup of $\Z^m / \Lambda$, as some of the direct summands in the decomposition of $\Z^m / \Lambda$ into primary cyclic groups will be missing. This means $\lattice(A_{[m] \cup I}) \varsubsetneq \Z^m$ for every $I \varsubsetneq \{m+1,\ldots,n\}$. 
\end{remark}

\begin{proof}[Proof of Corollary~\ref{cor:sparsity:dioph:sys}]
	Feasiblity of \eqref{lin:dioph:sys} can be expressed as $\vec b \in \lattice(A)$. Choose $\gamma$ from the assertion of Theorem~\ref{thm:sparsifying:lattice:A}. One has $\vec b \in \lattice(A) = \lattice(A_\gamma)$ and so there exists a solution $\vec x$ of \eqref{lin:dioph:sys} whose support is a subset of $\gamma$. This sparse solution $\ve x$ can be computed by solving the Diophantine system with the left-hand side matrix $A_\gamma$ and the right-hand side vector $\ve b$. 
\end{proof}

\begin{proof}[Proof of Corollary~\ref{cor:pos:spanning:columns}]
	Assume that the Diophantine system $A \vec x = \vec b, \  \vec x \in \Z^n$ has a solution. It suffices to show that, in this case, the integer-programming feasibility problem $A \vec x  = \vec b, \ \vec x \in \Z_{\ge 0}^n$ has a solution, too, and that one can find a solution of the desired sparsity to the integer-programming feasibility problem in polynomial time.
	
	One can determine $\gamma$ as in Theorem~\ref{card:gamma:bound} in polynomial time. Using $\gamma$, we can determine a solution $\vec x^\ast  = (x^\ast_1,\ldots,x^\ast_n)^\top \in \Z^n$ of the Diophantine system $A \vec x = b, \ \vec x \in \Z^n$ satisfying $x^\ast_i =0$ for $i \in [n] \setminus \gamma$ in polynomial time, as described in the proof of Corollary~\ref{cor:sparsity:dioph:sys}.
	
	Let $\vec a_1,\ldots,\vec a_n$ be the columns of $A$.  Since the matrix $A_\tau$ is non-singular, the $m$ vectors vectors $\vec a_i$, where $i \in \tau$, together with the vector $\vec v= -\sum_{i \in \tau} \vec a_i$ positively span $\R^n$. Since all columns of $A$ positive span $\R^n$, the conic version of the Carath\'eodory theorem implies the existence of a set $\beta \subseteq [m]$ with $|\beta| \le m$, such that $\vec v$ is in the conic hull of $\setcond{\vec a_i}{i \in \beta}$. Consequently, the set $\setcond{\vec a_i}{i \in \beta \cup \tau}$ and by this also the larger set $\setcond{\vec a_i}{i \in \beta \cup \gamma}$ positively span $\R^m$.  Let $I = \beta \cup \gamma$. By construction, $|I| \le |\beta| + |\gamma| \le m + |\gamma|$.
	
	Since the vectors $\vec a_i$ with $i \in I$ positively span $\R^m$, there exist a choice of rational coefficients $\lambda_i>0$ ($i \in I$) with $\sum_{i \in I} \lambda_i \vec a_i = 0$. After rescaling we can assume $\lambda_i \in \Z_{>0}$. Define $\vec x' = (x_1',\ldots,x_n')^\top \in \Z_{\ge 0}^n$ by setting $x'_i = \lambda_i$ for $i \in I$ and $x'_i=0$ otherwise. The vector $\vec x'$ is a solution of $A \vec x = \vec 0$. Choosing $N \in \Z_{>0}$ large enough, we can ensure that the vector $\ve x^\ast + N \ve x'$ has non-negative components. Hence, $\ve x = \ve x^\ast + N \ve x'$ is a solution of the system $A \vec x = \vec b, \ \vec x \in \Z_{\ge 0}^n$ satisfying the desired sparsity estimate. The coefficients $\lambda_i$ and the number $N$ can be computed in polynomial time.
\end{proof}
\begin{proof}[Proof of Corollary~\ref{cor:mixed:sign:knapsack}]
	The assertion follows by applying Corollary~\ref{cor:pos:spanning:columns} for $m=1$ and all $\tau = \{i\}$ with $i \in [n]$.
\end{proof}

\section{Proof of Theorem~\ref{thm:positive:knapsack}}


\begin{lemma}
\label{basic:lemma}
Let $a_1,\ldots,a_t \in \Z_{>0}$, where $t \in \Z_{> 0}$. If $t > 1 + \log_2 (a_1)$, then the system
\begin{align*}
& y_1 a_1 + \cdots + y_t a_t = 0,
\\  & y_1 \in \Z_{\ge 0}, \  y_2,\ldots,y_t \in \{-1,0,1\}.
\end{align*}
in the unknowns $y_1,\ldots,y_t$ has a solution that is not identically equal to zero.
\end{lemma}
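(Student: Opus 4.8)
The plan is a pigeonhole argument on subset sums reduced modulo $a_1$. Observe first that the hypothesis $t > 1 + \log_2(a_1)$ is equivalent to $2^{t-1} > a_1$; in particular $t \ge 2$, so $\{2,\ldots,t\}$ is a non-empty index set. For a subset $S \subseteq \{2,\ldots,t\}$ write $\sigma(S) := \sum_{i \in S} a_i$. There are $2^{t-1}$ subsets of $\{2,\ldots,t\}$ but only $a_1$ residue classes modulo $a_1$, so since $2^{t-1} > a_1$ there must be two distinct subsets $S_1 \ne S_2$ with $\sigma(S_1) \equiv \sigma(S_2) \pmod{a_1}$. After possibly interchanging $S_1$ and $S_2$, I may assume $\sigma(S_1) \le \sigma(S_2)$, and then $\sigma(S_2) - \sigma(S_1) = k a_1$ for some integer $k \ge 0$.

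Next I would read off the solution $\ve y$. Since the contribution of $S_1 \cap S_2$ cancels, $\sigma(S_1) - \sigma(S_2) = \sum_{i \in S_1 \setminus S_2} a_i - \sum_{i \in S_2 \setminus S_1} a_i$. Hence, setting $y_i := 1$ for $i \in S_1 \setminus S_2$, $y_i := -1$ for $i \in S_2 \setminus S_1$, and $y_i := 0$ for the remaining indices $i \in \{2,\ldots,t\}$, together with $y_1 := k$, I obtain
\[
y_1 a_1 + \sum_{i=2}^t y_i a_i = k a_1 + \bigl(\sigma(S_1) - \sigma(S_2)\bigr) = k a_1 - k a_1 = 0 .
\]
By construction $y_1 \in \Z_{\ge 0}$ and $y_2,\ldots,y_t \in \{-1,0,1\}$, so all constraints are met. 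Finally, since $S_1 \ne S_2$ the set $(S_1 \setminus S_2) \cup (S_2 \setminus S_1)$ is non-empty, so at least one of $y_2,\ldots,y_t$ is non-zero; in particular $\ve y$ is not identically zero, as required.

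There is no serious obstacle here: the only points that need care are (i) to take the subset sums modulo $a_1$, so that the number of pigeonholes is exactly $a_1$ and the hypothesis $2^{t-1} > a_1$ can be invoked, rather than working with the (potentially large) subset sums themselves, and (ii) to orient the pair $S_1, S_2$ so that the leftover integer multiple of $a_1$ is non-negative, which is precisely what allows $y_1 \ge 0$ to absorb it.
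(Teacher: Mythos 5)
Your proof is correct, and it takes a genuinely different route from the paper. You use the classical pigeonhole argument on the $2^{t-1}$ subset sums of $\{a_2,\ldots,a_t\}$ reduced modulo $a_1$: since $t>1+\log_2(a_1)$ is exactly $2^{t-1}>a_1$, two distinct subsets collide modulo $a_1$, and the signed indicator of their symmetric difference, with $y_1$ absorbing the non-negative multiple of $a_1$, gives the desired solution. The paper instead encodes the equation as the pair of strict inequalities $-1<y_1a_1+\cdots+y_ta_t<1$ together with $-2<y_i<2$ for $i\ge 2$, computes the volume of the resulting open parallelepiped as $4^t/(2a_1)$, and invokes Minkowski's first theorem to produce a non-zero integer point, negating it if necessary to get $y_1\ge 0$. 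Both arguments hit the same threshold $2^{t-1}>a_1$ and yield the same conclusion. Your version is more elementary (no geometry of numbers needed) and makes the role of $y_1\ge 0$ completely transparent through the orientation of the colliding pair; the paper's version fits a general template (reformulating Diophantine equations as slabs and applying lattice-point theorems) that the authors cite from earlier work and that generalizes more readily to systems of several equations, as in their Theorem~\ref{thm:pointed:cone}.
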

\begin{proof}
	The proof is inspired by the approach in \cite[\S~3.1]{averkov2012:SIDMA} (used in a different context) that suggests to reformulate the underlying equation over integers as two strict inequalities and then use Minkowski's first theorem \cite[Ch.~VII, Sect.~3]{MR1940576} from the geometry of numbers. 
	Consider the convex set $Y \subseteq \R^t$ defined by $2t$ strict linear inequalities
	\begin{align*}
	-1  < & y_1 a_1 + \cdots + y_t a_t < 1,
	\\ -2  < & y_i < 2 \text{ for all } i \in \{2,\ldots,t\}.
	\end{align*}
	Clearly, the set $Y$ is the interior of a hyper-parallelepiped and can also be described as $Y = \setcond{\ve y \in \R^t}{\|M \vec y\|_\infty < 1}$, where $M$ is the upper triangular matrix
	\[
	M = \begin{pmatrix} a_1 & a_2 & \cdots & a_t
	\\ & 1/2 & &
	\\ & & \ddots &
	\\ & & & 1/2
	\end{pmatrix}.
	\]
	It is easy to see that the $t$-dimensional volume $\vol(Y)$ of $Y$ is  \[ \vol(Y)=\vol(M^{-1}[-1,1]^t)=\frac{1}{\det(M)}2^t=\frac{4^t}{2 a_1}.\]
	The assumption $t > 1 + \log_2 (a_1)$ implies that the volume of $Y$ is strictly larger than $2^t$. Thus, by Minkowski's first theorem, the set $Y$ contains a non-zero integer vector $\ve y = (y_1,\ldots,y_t)^\top \in \Z^t$. Without loss of generality we can assume that $y_1 \ge 0$ (if the latter is not true, one can replace $\ve y$ by $-\ve y$). The vector $\ve y$ is a desired solution from the assertion of the lemma.
	\end{proof}

\begin{proof}[Proof of Theorem~\ref{thm:positive:knapsack}]
	Without loss of generality we can assume that $\gcd(\ve a)=1$. In fact, if $b$ is divisible by $\gcd(\ve a)$ we can convert $\ve a^\top \ve x = b$ to $\overline{\ve a}^\top \ve x = \overline{b}$ with $\overline{\ve a} = \frac{\ve a}{\gcd(\ve a)}$ and $\overline{b} = \frac{b}{\gcd(\ve a)}$, and, if $b$ is not divisible by $\gcd(\ve a)$, the knapsack feasibility problem $\ve a^\top \ve x = b, \ \ve x \in \Z_{\ge 0}^n$ has no solution.
	
	Without loss of generality, let $ a_1 = \min \{a_1,\ldots,a_n\}$. We need to show the existence of solution of the knapsack feasibility problem satisfying $\|\ve x\|_0 \le 1 + \log_2( a_1)$.
	
	Choose a solution $\ve x=(x_1,\ldots,x_n)^\top$ of the knapsack feasibility problem with the property that the number of indices $i \in \{2,\ldots,n\}$ for which $x_i \ne 0$ is minimized. Without loss of generality we can assume that, for some $t \in \{2,\ldots,n\}$ one has $x_2 > 0,\ldots, x_t > 0, x_{t+1} = \cdots = x_n = 0$.  Lemma~\ref{basic:lemma} implies $t \le 1 + \log_2 (a_1) $. In fact, if the latter was not true, then a solution $\ve y \in \R^t$ of the system in Lemma~\ref{basic:lemma} could be extended to a solution $\ve y \in \R^n$ by appending zero components. It is clear that some of the components $y_2,\ldots,y_t$ are negative, because $a_2 > 0,\ldots, a_t > 0$. It then turns out that, for an appropriate choice of $k \in \Z_{\ge 0}$, the vector $\ve x'= (x_1',\ldots,x'_n)^\top = \ve x + k \ve y $ is a solution of the same knapsack feasibility problem satisfying $x_1' \ge 0,\ldots, x'_t \ge 0, \ x'_{t+1} = \cdots = x'_n = 0$ and $x'_i=0$ for at least one $i \in \{2,\ldots,t\}$. Indeed, one can choose $k$ to be the minimum among all $a_i$ with $i \in \{2,\ldots,t\}$ and $y_i=-1$.
	
	The existence of $\ve x'$ with at most $t-1$ non-zero components $x_i'$ with $i \in \{2,\ldots,n\}$ contradicts the choice of $\ve x$ and yields the assertion.
\end{proof}

\vskip .3cm
\noindent {\bf Acknowledgements} The second author is supported  by the  DFG (German Research Foundation) within the project number 413995221.
 The third author acknowledges partial support from the NSF grants 1818969 and CCF-1934568.

\bibliographystyle{plain}
\bibliography{bib.bib}

\end{document}